\theoremstyle{plain}
\newtheorem{theorem}{Theorem}[section]
\newtheorem{corollary}[theorem]{Corollary}
\newtheorem{lemma}[theorem]{Lemma}
\newtheorem{fact}[theorem]{Fact}
\newtheorem{proposition}[theorem]{Proposition}
\newtheorem{claim}{Claim}
\theoremstyle{definition}
\newtheorem{definition}[theorem]{Definition}
\newtheorem{question}[theorem]{Question}
\newcommand{\R}{\mathbb{R}}
\newcommand{\supp}{\text{supp}}
\newcommand{\BPi}{\mathbf{\Pi}}
\newcommand{\BSigma}{\mathbf{\Sigma}}
\newcommand{\Lab}{\mathrm{Lab}}
\begin{document}

\title{Countable Borel treeable equivalence relations are classifiable by $\ell_1$}
\date{\today}
\author{Shaun Allison}
\address{Department of Mathematics, University of Toronto, Toronto, Canada}
\email{shaunpallison@gmail.com}
\urladdr{https://www.shaunallison.com}

\subjclass[2020]{Primary 54H05, 03E15; Secondary 22A05, 54H15}

\keywords{Polish group, CBER, treeable, Banach space, Hjorth analysis, hyperfinite, Borel reduction}

\begin{abstract}
    In \cite{GaoJackson2015}, it was shown that any countable Borel equivalence relation (CBER) induced by a countable abelian Polish group is hyperfinite.
    This prompted Hjorth to ask if this is in fact true for all CBERs classifiable by (uncountable) abelian Polish groups.
    
    We describe reductions involving \emph{free Banach spaces} to show that every treeable CBER is classifiable by an abelian Polish group.
    As there exist treeable CBERs that are not hyperfinite, this answers Hjorth's question in the negative.
    
    On the other hand, we show that any CBER classifiable by a countable product of locally compact abelian Polish groups (such as $\mathbb{R}^\omega$) is indeed hyperfinite.
    We use a small fragment of the \emph{Hjorth analysis} of Polish group actions, which is Hjorth's generalization of the Scott analysis of countable structures to Polish group actions.
\end{abstract}

\maketitle

\section{Introduction}

We say that a Borel equivalence relation $E$ is \textbf{classifiable by a Polish group $G$} if there is a Borel reduction from $E$ to an orbit equivalence relation induced by a continuous action of $G$ on a Polish space.
A \textbf{countable equivalence relation} is an equivalence relation in which every class is countable.

In \cite{GaoJackson2015}, Gao-Jackson proved that every countable equivalence relation classifiable by a countable abelian group is hyperfinite using deep combinatorial arguments.
This was originally proved for $\mathbb{Z}^n$ by Weiss.
The Gao-Jackson result prompted Hjorth to ask if this result can be strengthened as follows.

\begin{question}[Hjorth, \cite{HjorthQuestion}]\label{question:hjorth}
Suppose $E$ is a countable equivalence relation which is classifiable by an abelian Polish group. Must $E$ be hyperfinite?
\end{question}

There is good reason to think this might be true. For example, in \cite{DingGao2017}, Ding-Gao show how the Gao-Jackson result can be easily lifted to the case of classifiability by a non-Archimedean abelian Polish group, and in \cite{Cotton2019}, Cotton also gets a positive answer in the case of classifiability by a locally-compact abelian Polish group.

However, in this paper, we prove the following:

\begin{theorem}\label{thm:treeable}
Every countable treeable equivalence relation is classifiable by an abelian Polish group. In particular, they are classifiable by $\ell_1$.
\end{theorem}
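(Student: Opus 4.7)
The plan is to Borel-reduce the given treeable CBER $E$ on a Polish space $X$ to the orbit equivalence relation of a continuous translation action of $\ell_1$ on some Polish space, using the Lipschitz--free (Arens--Eells) Banach space functor applied class-wise to a Borel treeing $T \subseteq X^2$ of $E$.

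The key algebraic ingredient I would invoke is the classical fact that, for a countable pointed tree $(C, x_0)$ equipped with its graph metric, the Lipschitz--free Banach space $\mathcal{F}(C, x_0)$ is isometrically isomorphic to $\ell_1$ with basis indexed by the edges of $C$, and the canonical embedding $\iota_{x_0}: C \hookrightarrow \mathcal{F}(C, x_0)$ sending $y \mapsto \delta_y - \delta_{x_0}$ has the crucial property that changing the basepoint merely translates the image: $\iota_{y_0}(C) = \iota_{x_0}(C) - \iota_{x_0}(y_0)$. Applied class-wise, this realises each $E$-class $C = [x]_E$ as a countable subset of an $\ell_1$-space, with the dependence on the chosen basepoint $x \in C$ being precisely by translation in that $\ell_1$-space.

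Assembling this into a single Borel reduction into a fixed Polish $\ell_1$-space is the next step. I would fix a Borel linear order on $X$ and, for each $x$, use breadth-first search in $T|_{[x]_E}$ rooted at $x$ (with ties broken by the order) to produce a Borel-uniform identification $\mathcal{F}([x]_E) \cong \ell_1(\mathbb{N})$. Composing with $\iota_x$ then yields a Borel map $f(x) = \iota_x([x]_E) \subseteq \ell_1(\mathbb{N})$. The natural target is a suitable Polish $\ell_1$-space whose translation orbits are exactly translates of closed subsets of $\ell_1$ (for instance, a Polish topology on a subspace of closed subsets of $\ell_1$, or an auxiliary space such as $C(K, \ell_1)$ with pointwise translation into which $f$ can be encoded). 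By the translation property above, $x E y$ would force $f(x)$ and $f(y)$ to lie in the same $\ell_1$-translation orbit.

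The main obstacle, as I see it, is that the breadth-first enumeration depends on the basepoint $x$ and not on the class alone, so for $x E y$ the two identifications of $\mathcal{F}([x]_E)$ with $\ell_1(\mathbb{N})$ differ not only by an $\ell_1$-translation but also by a permutation of the coordinate basis. Thus the naive $f(x)$ retains information beyond the class's translation orbit. Resolving this cleanly---by working at the level of closed subsets (rather than sequences) so that basis permutation becomes invisible, or by choosing a smarter encoding that genuinely absorbs the permutation into the $\ell_1$-action---is where I expect the bulk of the technical work to lie. Once the Polish $\ell_1$-space target and the Borel map $f$ are set up consistently, verifying that $f$ reduces $E$ to the translation-orbit equivalence follows immediately from the basepoint-translation property of $\iota$, establishing the desired classification of $E$ by $\ell_1$.
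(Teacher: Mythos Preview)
Your algebraic intuition is exactly right: the Lipschitz--free space of a tree with its graph metric is $\ell_1$ over the edge set, and changing the basepoint corresponds to a translation. This is precisely the phenomenon the paper exploits. But the obstacle you flag in your last paragraph is a genuine gap, and neither of your two suggested fixes closes it. Coordinate permutations of $\ell_1$ are nontrivial isometries outside the translation group, so passing to closed subsets does \emph{not} make them invisible: if $A \subseteq \ell_1$ and $\pi$ is a nontrivial basis permutation, then $\pi(A)$ and $A$ are typically not translates of each other. And there is no evident way to ``absorb the permutation into the $\ell_1$-action'' without enlarging the acting group beyond the abelian world, which defeats the purpose.

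The paper's resolution is to avoid the class-wise identification entirely. Rather than building a separate $\mathcal{F}([x]_E) \cong \ell_1$ for each class and then trying to glue, it fixes \emph{one} global Polish space $\Lab$ of edge labels (a Borel bijection between $\Lab$ and a choice of orientation for every edge of the treeing), puts a carefully chosen complete metric $d \le 1$ on $\Lab$, and works inside the single free Banach space $B(\Lab)$. The path from $x$ to $y$ becomes the element $p_{x,y} = \sum_i \pm 1 \cdot \ell_i \in B(\Lab)$, and the cocycle identity $p_{x,z} = p_{x,y} + p_{y,z}$ gives the translation behaviour with no permutation ambiguity, because all classes share the same ambient Banach space. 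The reduction is then $x \mapsto \{(p_{x,y}, y) : y \mathrel{E} x\} \in \mathcal{F}(B(\Lab) \times X)$, with $B(\Lab)$ acting by translation in the first coordinate; the second coordinate is retained so that the backward direction of the reduction is immediate. The one remaining issue---that this set must be closed---is handled by choosing the metric on $\Lab$ to be ``stretched'' (labels of edges at graph-distance $k$ in the line graph are at least $\epsilon/k$ apart), which forces the set of path labels from any fixed $x$ to be uniformly discrete in $B(\Lab)$. Finally, classifiability by $\ell_1$ itself is obtained not by a direct construction but by invoking the Gao--Pestov theorem that $\ell_1$ involves every abelian Polish group, together with Mackey--Hjorth.
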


We know that there exists countable Borel treeable equivalence relations which are not hyperfinite, such as the free part of the action of the free group $F_2$ on $2^{F_2}$ by shifts (see \cite{JKL2002}). This allows us to answer Hjorth's question in the negative.

Hjorth also asked Question \ref{question:hjorth} for the specific case of equivalence relations classifiable by $\mathbb{R}^\omega$. For this we prove the following:

\begin{theorem}\label{thm:lca_classifiable}
Every $\BPi^0_3$ equivalence relation which is classifiable by a countable product of locally compact abelian Polish groups is Borel-reducible to $E_0^\omega$.
\end{theorem}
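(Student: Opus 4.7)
The plan is to construct countably many Borel reductions $\varphi_n \colon X \to 2^\omega$, each capturing the $E$-class at an increasingly fine scale determined by the product structure of $G$, and then combine them into a map to $E_0^\omega$. Fix a Borel reduction $f \colon X \to Y$ of $E$ to the orbit equivalence relation $E_G^Y$, where $G = \prod_{n} G_n$ with each $G_n$ a locally compact abelian Polish group.

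First, I would exploit the product structure of $G$. Writing $G = H_n \times K_n$ with $H_n = \prod_{k \le n} G_k$ (a finite product of locally compact abelian Polish groups, hence itself LCA) and $K_n = \prod_{k > n} G_k$, each $G$-orbit is a union of $K_n$-orbits permuted by $H_n$. The goal is to attach to each $x$ a canonical invariant at level $n$ that remembers only the action of $H_n$ on the $K_n$-orbit of $f(x)$, and to let $E_n$ be the equivalence relation on $X$ given by equality of this invariant. The relation $E_n$ should then be classifiable by the LCA group $H_n$.

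Second, a small fragment of the Hjorth analysis applied to the $G$-action on $Y$ should yield, for each $n$, a Borel $H_n$-invariant assignment $x \mapsto A_n(x)$ of precisely this sort, obtained by taking a suitable closure of the $K_n$-translates of $f(x)$. The crucial technical step is to verify that the resulting $E_n$ is itself $\BPi^0_3$, which should follow from the fact that $E$ is $\BPi^0_3$ together with the fact that the closures computed by the Hjorth analysis at finite levels are $F_\sigma$ (or at worst $\BPi^0_3$) in $Y$. Assuming this, each $E_n$ is a $\BPi^0_3$ equivalence relation classifiable by an LCA Polish group, so by Cotton's theorem (cited earlier in the introduction) we get $E_n \leq_B E_0$ via some Borel map $\varphi_n \colon X \to 2^\omega$.

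Finally, set $\Phi(x) = (\varphi_n(x))_{n < \omega}$. The direction $x E x' \Rightarrow \Phi(x) \, E_0^\omega \, \Phi(x')$ is automatic since $E$ refines every $E_n$. The main obstacle, and the place where the $\BPi^0_3$ hypothesis does real work, is the converse: showing that if the invariants $A_n(f(x))$ and $A_n(f(x'))$ agree $E_0$-mod-finite at every level $n$, then $f(x)$ and $f(x')$ lie in a common $G$-orbit. I expect this to be handled by a limit-of-closures argument whose termination is forced by the bound $\BPi^0_3$ places on the Hjorth rank of the action: two points sharing all finite-level LCA invariants cannot generate a transfinite tower of proper closures, and so must already be $E_G^Y$-equivalent.
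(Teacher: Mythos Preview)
Your overall architecture matches the paper's: decompose $G = \prod_n G_n$ into finite initial products $H_n = \prod_{k \le n} G_k$ and tails $K_n$, build for each $n$ a Borel homomorphism to an $H_n$-orbit equivalence relation, apply Cotton's theorem at each level to land in $E_0$, and invoke the $\BPi^0_3$ hypothesis together with the Hjorth analysis for the converse direction. Two points deserve comment.

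First, a misplaced emphasis. You flag as ``crucial'' the verification that each $E_n$ is $\BPi^0_3$ before applying Cotton. This is unnecessary: Cotton's theorem asserts that \emph{every} orbit equivalence relation of a locally compact abelian Polish group is essentially hyperfinite, with no hypothesis on its Borel complexity. Once your $E_n$ is, by construction, the pullback of an $H_n$-orbit equivalence relation, $E_n \le_B E_0$ is immediate. The $\BPi^0_3$ hypothesis does no work at this stage.

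Second, and more substantively, your account of the converse direction is where the real gap lies. The paper does not argue via termination of a ``transfinite tower of proper closures'' or a bound on the Hjorth rank in that sense. It isolates two concrete facts. The first is that agreement of the level-$n$ invariants for all $n$ forces $f(x) \sim_1 f(x')$ in the sense of the Hjorth relation $\le_2$: the paper's invariant $f_n$ records, for each basic open $U_k$ in the target space, the closure of $\{g \in G : K_n g \cdot f(x) \text{ meets } U_k\}$, and from agreement of these (up to an $H_n$-translate) one reads off directly that for every open $V \subseteq G$ there is a translate $Vh \cdot f(x') \subseteq \overline{V \cdot f(x)}$, which is precisely $(f(x), G) \le_2 (f(x'), G)$. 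The second is a standalone lemma: if $A, B$ are $G$-invariant $\BPi^0_3$ subsets of the $G$-space and there are $x \in A$, $y \in B$ with $x \sim_1 y$, then $A \cap B \neq \emptyset$. Its proof is a change-of-topology argument --- refine the topology (via Hjorth's sharp form of the Becker--Kechris theorem) so that $A$ and $B$ become $G_\delta$, check that $x \sim_1 y$ still yields $\overline{G \cdot x} = \overline{G \cdot y}$ in the new topology, and then Baire category forces $A \cap B \neq \emptyset$. This is where, and how, the $\BPi^0_3$ hypothesis is actually used: not as a rank bound on a transfinite process, but as exactly the Borel complexity that the refined topology can collapse to $G_\delta$. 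Your outline would need this lemma (or an equivalent) made explicit to become a proof.
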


Essentially countable equivalence relations are potentially $\BPi^0_3$ (in fact potentially $\BSigma^0_2$), thus by the Hjorth-Kechris Sixth Dichotomy Theorem, we can conclude the following:

\begin{corollary}
The answer to Question \ref{question:hjorth} is yes for the special case of equivalence relations classifiable by countable products of locally compact abelian Polish groups, such as $\R^\omega$.
\end{corollary}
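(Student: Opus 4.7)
The strategy is to combine Theorem \ref{thm:lca_classifiable} with the Hjorth-Kechris Sixth Dichotomy, using the topological simplicity of countable Borel equivalence relations to meet the complexity hypothesis of Theorem \ref{thm:lca_classifiable}.

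Let $E$ be a countable Borel equivalence relation classifiable by a countable product of locally compact abelian Polish groups. Every CBER is (trivially) essentially countable, and, as noted just above, essentially countable Borel equivalence relations are potentially $\BSigma^0_2$, in particular potentially $\BPi^0_3$. Concretely, by Feldman-Moore one writes $E$ as a countable union of Borel partial bijections and refines the Polish topology on the underlying standard Borel space so that these partial bijections become continuous; in the refined topology $E$ is $F_\sigma$, hence certainly $\BPi^0_3$. Crucially, classifiability by a fixed Polish group $G$ is defined via Borel reductions, and so depends only on the standard Borel structure of the domain; in the refined topology $E$ remains classifiable by the same countable product of LCA Polish groups.

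Theorem \ref{thm:lca_classifiable} now applies to $E$ (viewed in the refined topology) and produces a Borel reduction $E \leq_B E_0^\omega$. To finish, I would invoke the Hjorth-Kechris Sixth Dichotomy Theorem, which entails that an essentially countable Borel equivalence relation Borel-reducible to $E_0^\omega$ is already Borel-reducible to $E_0$, i.e., hyperfinite. Applied to $E$, this is exactly the desired conclusion.

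The whole argument is a short chain of reductions, with no new combinatorial input; the only bookkeeping point worth checking is that classifiability is invariant under refinement of the Polish topology, which is immediate because Borel reductions only see the standard Borel structure. The substantive inputs are Theorem \ref{thm:lca_classifiable} and the Sixth Dichotomy, which I would treat as a black box; the main potential obstacle is confirming that the form of the dichotomy being cited really promotes $\leq_B E_0^\omega$ to $\leq_B E_0$ for essentially countable equivalence relations (as opposed to only for some narrower subclass), but the excerpt's framing makes clear this is the version intended.
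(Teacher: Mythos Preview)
Your proposal is correct and follows exactly the route the paper sketches just before the Corollary: a CBER is potentially $\BSigma^0_2$ (hence potentially $\BPi^0_3$), so after refining the topology Theorem~\ref{thm:lca_classifiable} yields $E \le_B E_0^\omega$, and then the Hjorth--Kechris Sixth Dichotomy gives hyperfiniteness. Your explicit remark that classifiability depends only on the Borel structure, so survives the topology refinement, is precisely the bookkeeping the paper leaves implicit.
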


We prove Theorem \ref{thm:treeable} in Section \ref{sec:treeable} and Theorem \ref{thm:lca_classifiable} in Section \ref{sec:lca_classifiable}. We make some more remarks and leave some further questions at the end.

\subsection{Acknowledgements}

The author would like to thank Clinton Conley, Aristotelis Panagiotopoulos, Su Gao, Omer Ben-Neria, Slawek Solecki, and Benjamin Weiss for many valuable conversations. 

An earlier version of the first half of this paper appeared in the author's PhD thesis.

\section{Preliminaries}

We assume a basic understanding of the theory of Borel reducibility, e.g. \cite{Gao2008}. 
We say that a Polish group $G$ \textbf{involves} a Polish group $H$ if there is a closed subgroup $G'$ and a continuous surjective homomorphism from $G'$ onto $H$.
The following fact is important in the classification theory of equivalence relations.
\begin{fact}[Mackey, Hjorth]\label{fact:mackey-hjorth}
Suppose $G$ and $H$ are Polish groups such that $G$ involves $H$. Then for any Polish $H$-space $Y$ there is a Polish $G$-space $X$ such that $E^G_X$ and $E^H_Y$ are Borel bi-reducible.
\end{fact}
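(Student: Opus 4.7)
The plan is to reduce to the two ingredients in the definition of \emph{involves} and handle each in turn. By hypothesis we have a closed subgroup $G' \leq G$ and a continuous surjective homomorphism $\pi \colon G' \to H$. The easy half is pullback along $\pi$: any Polish $H$-space $Y$ becomes a Polish $G'$-space under $g' \cdot y := \pi(g') \cdot y$, and since $\pi$ is surjective, every $H$-orbit coincides with a $G'$-orbit, giving literal equality $E^{G'}_Y = E^H_Y$. It therefore suffices to prove the \emph{Mackey induction} step: for any Polish $G'$-space $Z$ there is a Polish $G$-space $X$ with $E^G_X$ Borel bi-reducible with $E^{G'}_Z$.

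For this I would use the classical co-induced action. Take $X := G \times_{G'} Z$, the quotient of $G \times Z$ by the free right $G'$-action $(g, z) \cdot g' := (gg', (g')^{-1} \cdot z)$, with $G$ acting by left translation on the first coordinate, $h \cdot [g, z] := [hg, z]$. The bi-reduction then proceeds as follows. On one side, the map $z \mapsto [1_G, z]$ is clearly Borel, and a direct unwinding shows that $[1_G, z_1]$ and $[1_G, z_2]$ lie in the same $G$-orbit iff $z_2 = g' \cdot z_1$ for some $g' \in G'$, so this reduces $E^{G'}_Z$ to $E^G_X$. In the other direction, choose a Borel selector $s \colon G/G' \to G$ for the left cosets and define $\varphi([g, z]) := (s(gG')^{-1} g) \cdot z$. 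Since $s(gG')^{-1} g \in G'$, this is independent of the chosen representative and is a Borel map $X \to Z$; moreover its value always lies in the $G'$-orbit of $z$, so it reduces $E^G_X$ back to $E^{G'}_Z$.

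The main obstacle is making $X$ into a genuine Polish $G$-space, since a priori $G \times_{G'} Z$ is only manifestly standard Borel (using the transversal $s$ to identify it Borel-wise with $G/G' \times Z$). I would appeal to the Becker--Kechris theorem, which upgrades any standard Borel action of a Polish group to a continuous action on a Polish space carrying the same orbit equivalence relation up to Borel isomorphism; this suffices here because only Borel bi-reducibility is at stake. A more hands-on alternative is to realise $X$ directly as a Polish space of Borel $G'$-equivariant cross-sections $G \to Z$ with a suitable topology, but the Becker--Kechris route sidesteps the topological technicalities that arise when $G'$ fails to be open in $G$.
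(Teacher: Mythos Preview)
Your argument is correct and follows the standard Mackey--Hjorth construction via the co-induced action $G \times_{G'} Z$, with the Becker--Kechris retopologisation used exactly where it is needed. The paper itself does not prove this fact; it records it as a black box and refers to \cite[Theorem~2.3.5]{BeckerKechris1996}, where essentially this argument (in a stronger form) is carried out.
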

In particular, if an equivalence relation is classifiable by $H$ and $G$ involves $H$, then by the previous fact, it is classifiable by $G$.
See \cite[Theorem 2.3.5]{BeckerKechris1996} for its original, stronger formulation as well as its proof.

Let $X$ be a Polish space.
Recall that $\mathcal{F}(X)$ denotes the standard Borel space of all closed subsets of $X$ with the Effros Borel structure, namely the $\sigma$-algebra generated by sets of the form 
\[\{C \in \mathcal{F}(X) \mid C \cap U \neq \emptyset\}\]
for open $U \subseteq X$.
If $X$ is a Polish $G$-space, meaning that it comes with a continuous action $G \curvearrowright X$ of a Polish group $G$, then the action
\[ G \curvearrowright \mathcal{F}, \quad g \cdot C = \{g \cdot x \mid x \in C\}\]
is Borel, making it a Borel $G$-space (i.e. a standard Borel space equipped with Borel action of a Polish group).
Recall that every Borel $G$-space can be equipped with a Polish topology generating the same Borel sets making it a Polish $G$-space \cite[Therorem 5.2.1]{BeckerKechris1996}.

Given a Polish space $X$, we write $X^{\le \omega}$ to refer to the Polish space $\bigsqcup_{\alpha \le \omega} X^\alpha$.
The following is easy to check.
\begin{lemma}\label{lem:effros_borel}
The map
\[ X^{\le \omega} \rightarrow \mathcal{F}(X), \quad (x_i)_{i <\alpha} \mapsto \overline{\{x_i \mid i < \alpha\}}\]
is Borel.
\end{lemma}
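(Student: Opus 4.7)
The plan is to unwind the definition of the Effros Borel structure: it is generated by the sets $A_U := \{C \in \mathcal{F}(X) : C \cap U \neq \emptyset\}$ for $U \subseteq X$ open, so to see that the map $\varphi : X^{\le \omega} \to \mathcal{F}(X)$ in question is Borel it suffices to check that $\varphi^{-1}(A_U)$ is Borel in $X^{\le \omega}$ for every open $U$.

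The key observation I would use is that the closure operation is invisible to intersections with open sets: for any $S \subseteq X$ and any open $U \subseteq X$, one has $\overline{S} \cap U \neq \emptyset$ iff $S \cap U \neq \emptyset$ (the nontrivial direction uses only that $U$ is an open neighborhood of every point it contains, so must meet $S$ if it meets $\overline{S}$). Applying this with $S = \{x_i : i < \alpha\}$, one gets
\[ \varphi^{-1}(A_U) = \{(x_i)_{i < \alpha} \in X^{\le \omega} : \exists \, i < \alpha, \ x_i \in U\}. \]
For each fixed $\alpha \le \omega$, the intersection of this set with the clopen piece $X^\alpha$ of $X^{\le \omega}$ is $\bigcup_{i < \alpha} \{(x_j)_{j < \alpha} : x_i \in U\}$, a union of basic open sets, hence open in $X^\alpha$. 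Taking the union over all $\alpha \le \omega$ shows that $\varphi^{-1}(A_U)$ is open in $X^{\le \omega}$, and in particular Borel.

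There is really no main obstacle here; the only thing to notice is precisely the openness of $U$, which is what lets us push the intersection past the closure. In fact the argument shows the slightly stronger statement that $\varphi$ is continuous when $\mathcal{F}(X)$ is given the lower Vietoris topology generated by the $A_U$'s, and the Effros Borel structure is the Borel structure of this topology.
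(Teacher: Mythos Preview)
Your argument is correct and is exactly the standard verification one has in mind; the paper itself omits the proof entirely, stating only that the lemma ``is easy to check.'' Your additional remark that $\varphi$ is in fact continuous for the lower Vietoris topology is a nice bonus, but not needed for the paper's purposes.
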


\subsection{Free Banach spaces}
Given a metric space $(X, d)$ with $d \le 1$ one can define the \textbf{free Banach space over X}. 
The construction was first introduced by Pestov in \cite{Pestov1986}.
We will review the most relevant aspects as presented in \cite{GaoPestov2003}.

We consider the metric space $(X \sqcup \{*\}, d)$ where $*$ is some new point such that $d(x, *) = d(*, x) = 1$ for every $x \in X$. 
Define $L(X)$ to be the unique real vector space which has $X \sqcup \{*\}$ as a Hamel basis.

For $x \in L(X)$, we define the norm $\|x\|$ to be the infimum over all sums
\[\sum_{i=1}^n |\lambda_i| d(x_i, y_i)\]
for $n \in \omega$, $x_i, y_i \in X \sqcup \{*\}$, and $\lambda_i \in \mathbb{R}$ satisfying
\[ \sum_{i=1}^n \lambda_i x_i = x \quad \text{and} \quad  \sum_{i=1}^n \lambda_i y_i \in \text{Span}\{*\}.\]

To see that this is indeed a norm on $L(X)$, see e.g. \cite[Theorem 2.2]{GaoPestov2003}. Let $\bar{d}$ be the induced metric $\bar{d}(x, y) = \| x - y \|$. It's easy to check that if $X$ is separable, then so is $L(X)$, with countable dense set

\begin{equation}\label{eq:countable_dense}
\left\{\sum_{i=1}^n \lambda_i x_i \mid n \in \omega, \; \lambda_i \in \mathbb{Q}, \; x_i \in D\right\},  
\end{equation}
where $D$ is some countable dense subset of $X$.

The infimum in the definition of $\| x\|$ is always attained for some choice of $x_i, y_i$ with $x_i, y_i \in \supp(x)$ 
for every $1 \le i \le n$ (see \cite[Corollary 2.9]{GaoPestov2003}). 
From this fact it is easy to check that
\begin{equation}\label{eq:lower_bound}
\left\|\sum_{i=1}^n \lambda_i x_i \right\| \ge  \frac{1}{2} \left(\sum_{i=1}^n |\lambda_i|\right) \cdot \min \{d(x_i, x_j) \mid i \neq j\}
\end{equation}
always holds.

We can of course identify $X$ with its copy in $L(X)$ with the map $x \mapsto 1 x$. In fact, $\bar{d} \upharpoonright X = d$ (see \cite[Corollary 2.12]{GaoPestov2003} and the comments before it).

Define the free Banach space over $X$, denoted $B(X)$, to be the completion of $L(X)$.
The inclusion map $X \rightarrow B(X)$ is an isometric embedding.
Letting $Z^{<\omega} = \bigsqcup_{n < \omega} Z^n$ for any Polish $Z$, it is easy to check that the map
\[ (X \times \mathbb{R})^{<\omega} \rightarrow B(X), \quad (x_i, \lambda_i)_{i < n} \mapsto \sum_{i < n} \lambda_i \cdot x_i\]
is continuous.

\section{Countable treeable equivalence relations}\label{sec:treeable}

Our goal in this section is to see that every countable treeable Borel equivalence relation is classifiable by an abelian Polish group.
We first have to introduce a notion of Polish edge labelings, and a property of such labelings that we will call ``stretched".

\subsection{Polish edge labelings}

Given a Polish space $X$, a {\boldmath\bfseries (undirected) graph on $X$} is a set
\[G \subseteq (X \times X) \setminus \{(x, x) \mid x \in X\}\]
satisfying symmetry. We write $E_G \subseteq X \times X$ as the connectivity relation, which is Borel when $G$ is Borel and has countable components.

A {\boldmath\bfseries path in $G$} is a sequence $(x_0, x_1), (x_1, x_2), ..., (x_{n-1}, x_n)$ of edges of $G$. Say that two points $x$ and $y$ have {\boldmath\bfseries path distance $k$} iff $k$ is least such that there is a path consisting of $k$ edges from $x$ to $y$.

A {\boldmath\bfseries (directed) Polish edge labeling} $\gamma : \Lab \rightarrow G$ of a graph $G$ is a Polish space $\Lab$ and a continuous injective function $\gamma$ satisfying
\[\forall (x, y) \in G, \textrm{ exactly one of } (x, y) \textrm{ and } (y, x) \textrm{ is in the image of }\gamma.\]
In other words, the labeling associates to every edge a unique real and an orientation. It is clear by the usual change-of-topology arguments that every Borel graph has a Polish edge labeling. 

Suppose $\gamma : \Lab \rightarrow G$ is a Polish edge labeling of a graph living on $X$. Identify the image $\gamma[\Lab]$ with its copy in $B(\Lab)$ by the map 
\[(x, y) \mapsto 1 \cdot l, \quad \textrm{where }\gamma(l) = (x, y),\]
and $G \setminus \gamma[\Lab]$ with its copy in $B(\Lab)$ by the map 
\[(x, y) \mapsto -1 \cdot l, \quad \textrm{where }\gamma(l) = (y, x).\]

Think of $1 \cdot l$ as traveling ``forward" along the edge $\gamma(l)$, and $-1 \cdot l$ as travelling ``backwards" along $\gamma(l)$. Temporarily define $\sigma : \{(\pm 1) \cdot l \mid l \in \Lab\} \rightarrow G$ to be the inverse of the union of these two maps.

Say that a sum $\sum_{i=1}^n \lambda_i l_i \in B(\Lab)$ is a {\boldmath\bfseries path label} iff $\lambda_l = 1, -1$ for $i = 1, ..., n$, and $\sigma(\lambda_{j_1}l_{j_1}), ..., \sigma(\lambda_{j_n}l_{j_n})$ is a path in $G$ for some reordering $l_{j_1}, ..., l_{j_n}$ of $l_1, ..., l_n$. Note that such a path can only travel along an edge once. If the graph $G$ is acyclic, then there is exactly one path label between any two points in the same connected component. Naturally, we say that a path label starts at a vertex $x \in X$ and ends at a vertex $y \in X$, iff the path that it labels starts at $x$ and ends at $y$.

\begin{lemma}\label{lem:path_labels_borel}
If $\gamma : \Lab \rightarrow G$ is a Polish edge labeling and $G$ is acyclic, then the map
\[ E_G \rightarrow B(\Lab), \quad (x, y) \mapsto p_{x, y} \]
is Borel, where $p_{x, y}$ is the unique path label from $x$ to $y$.
\end{lemma}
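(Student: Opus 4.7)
The plan is to partition $E_G$ by path distance and handle each distance separately, exploiting acyclicity to make the path depend Borel-measurably on the endpoints.

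For each $k \ge 0$, let $E_G^{=k} = \{(x,y) \in E_G : \text{path distance}(x,y) = k\}$. These are Borel and partition $E_G$. Let
\[ P_k = \{(x_0, \ldots, x_k) \in X^{k+1} : (x_i, x_{i+1}) \in G \text{ for all } i < k, \text{ with the } x_i \text{ pairwise distinct}\}, \]
which is a Borel subset of $X^{k+1}$, and let $\pi_k(x_0, \ldots, x_k) = (x_0, x_k)$. Because $G$ is acyclic, $\pi_k$ is a continuous bijection from $P_k$ onto $E_G^{=k}$, so by Lusin--Souslin the inverse $\pi_k^{-1} : E_G^{=k} \to P_k$ is Borel.

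Next, I would build a Borel map $\Phi_k : P_k \to B(\Lab)$ that sends a path to its path label. Since $\gamma : \Lab \to G$ is a continuous injection between Polish spaces, Lusin--Souslin again gives that $\gamma[\Lab]$ is Borel in $X \times X$ and $\gamma^{-1}$ is Borel on its image. So for each consecutive pair $(x_i, x_{i+1})$ appearing in a tuple in $P_k$, we can Borel-measurably extract a label $l_i \in \Lab$ and a sign $\epsilon_i \in \{\pm 1\}$ according as $(x_i, x_{i+1})$ or $(x_{i+1}, x_i)$ lies in $\gamma[\Lab]$. This yields a Borel map $P_k \to (\Lab \times \R)^{<\omega}$ sending $(x_0,\ldots,x_k)$ to $((l_i, \epsilon_i))_{i<k}$; composing with the continuous map $(\Lab \times \R)^{<\omega} \to B(\Lab)$ noted at the end of the preliminaries produces $\Phi_k$.

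Finally, $\Phi_k \circ \pi_k^{-1} : E_G^{=k} \to B(\Lab)$ is Borel and agrees with $(x,y) \mapsto p_{x,y}$ on $E_G^{=k}$, and since $E_G = \bigsqcup_k E_G^{=k}$ is a Borel partition into Borel pieces, the full map is Borel. The only nontrivial ingredient is Lusin--Souslin, invoked twice to turn Borel (in fact continuous) injections into Borel bijections onto their images; the main thing to get right is the bookkeeping that lets the sign and label of each edge be read off coordinate-by-coordinate in the Polish space $P_k$.
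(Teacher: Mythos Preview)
Your argument is correct. The paper's proof is organized slightly differently: instead of stratifying by path length in vertex space, it works directly in label space, defining the Borel set $A \subseteq X^2 \times (\Lab \times \{-1,1\})^{<\omega}$ of all $(x,y,(\ell_i,\lambda_i)_{i<n})$ that encode a path from $x$ to $y$ up to permutation, observes that each section $A_{(x,y)}$ is finite, and then pushes $A$ forward under the continuous finite-to-one map into $X^2 \times B(\Lab)$ to obtain the graph of $(x,y)\mapsto p_{x,y}$ as a Borel set. Your approach trades the single ``Borel finite-to-one image is Borel'' step for two explicit invocations of Lusin--Souslin (inverting $\pi_k$ and inverting $\gamma$), which makes the dependence on acyclicity and on the injectivity of $\gamma$ more transparent; the paper's version is a bit more compact but relies on the reader recognizing that the graph being Borel suffices. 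Both routes are standard and of comparable difficulty.
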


\begin{proof}
Let $A \subseteq X^2 \times (\Lab \times \{-1, 1\})^{<\omega}$ be the Borel set of $(x, y, ((\ell_i, \lambda_i)_{i < n}))$ such that for some permutation $\pi$ of $\{0, ..., n-1\}$, we have that $x$ is the left-component of $\sigma(\lambda_{\pi(0)}  \ell_{\pi(0)})$, $y$ is the right-component of $\sigma(\lambda_{\pi(n-1)}  \ell_{\pi(n-1)})$, and for every $0 \le i < n-1$, the right component of $\sigma(\lambda_{\pi(i)} \ell_{\pi(i)})$ is equal to the left-component of $\sigma(\lambda_{\pi(i+1)} \ell_{\pi(i+1)})$.
Notice that for any $(x, y)$, the section $A_{(x, y)}$ must be finite.
In particular, by the remark at the end of Chapter 2, the set $B \subseteq X^2 \times B(\Lab)$ of all $(x, y, p_{x, y})$ is a continuous finite-to-one image of $A$.
The set $B$ is the graph of the desired function.
\end{proof}

We remark that path labels have the following nice closure property: if $p$ is a path label from $x$ to $y$ and $q$ is a path label from $y$ to $z$, then  $p+q$ is a path label from $x$ to $z$. In particular, if $G$ is acyclic, then $p_{x, z} = p_{x, y} + p_{y, z}$ for any $x, y, z$ in the same connected component.

\subsection{Stretched labelings}
We will want our edge labelings to satisfy a property that we will call ``stretched".

Let $G_{\Lab}$ be the {\boldmath\bfseries edge graph} on $\Lab$, with edge $(l_1, l_2)$ iff $\gamma(l_1)$ and $\gamma(l_2)$ are incident to the same vertex (regardless of direction).

\begin{definition}
Given a Polish edge labeling $\gamma : \Lab \rightarrow G$, and a compatible complete metric $d \le 1$ on $\Lab$, say that {\boldmath\bfseries $\gamma$ is stretched with respect to $d$} iff for some fixed $\epsilon > 0$, whenever $l_1$ and $l_2$ have path-distance $k \ge 1$ in the edge graph $G_{\Lab}$, we have $d(l_1, l_2) \ge \epsilon \cdot 1/k$.
\end{definition}

Locally-finite Borel graphs always have stretched Polish edge labelings:

\begin{lemma}\label{lem:stretched_labeling}
Let $G$ be a locally-finite Borel graph. Then there is a Polish edge labeling $\gamma : \Lab \rightarrow G$ and a compatible complete metric $d \le 1$ on $\Lab$ such that $\gamma$ is stretched with respect to $d$.
\end{lemma}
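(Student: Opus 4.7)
The plan is to realize $\Lab$ as a Polish refinement of an orientation of $G$, with the refined topology designed so that edges close in the line graph are forced apart in the metric. I would use three ingredients: a Borel orientation, countable Borel proper colorings of all powers of the line graph, and a max-metric combining these.

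First, fix a Borel orientation $G^+ \subseteq G$ (e.g.\ using a Borel linear order on $X$ and orienting each undirected edge from smaller to larger endpoint) so that $G^+$ meets every undirected edge exactly once. Take $\Lab = G^+$ with $\gamma$ the inclusion into $X^2$, so that the image condition on $\gamma$ is automatic. Because $G$ is locally finite, the edge graph $G_{\Lab}$ and every power $G_{\Lab}^{[k]}$ (in which $l_1, l_2$ are adjacent iff their line-graph distance is at most $k$) are locally finite Borel. Invoking the Kechris--Solecki--Todorcevic theorem that every locally countable Borel graph on a standard Borel space admits a countable Borel proper coloring, I pick for each $k \ge 1$ a Borel $c_k : G^+ \to \omega$ properly coloring $G_{\Lab}^{[k]}$; by properness, $c_k(l_1) = c_k(l_2)$ forces the line-graph distance between $l_1$ and $l_2$ to be strictly greater than $k$.

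Now I refine the subspace topology $\tau_0$ on $G^+ \subseteq X^2$ to a Polish topology $\tau$ by declaring every $c_k^{-1}(i)$ to be clopen (standard change-of-topology); this keeps $\gamma$ continuous and makes each $c_k$ a $\tau$-continuous map into discrete $\omega$. Fix any compatible complete metric $d_1 \le 1$ for $(G^+, \tau)$, define
\[ f(l_1, l_2) \;=\; \max_{k \ge 1} \tfrac{1}{k}\cdot \mathbf{1}[c_k(l_1) \ne c_k(l_2)], \]
and set $d := \max(d_1, f) \le 1$. The stretched property then follows with $\epsilon = 1$: if $l_1, l_2$ have line-graph path-distance exactly $k$, they are $G_{\Lab}^{[k]}$-adjacent, so $c_k(l_1) \ne c_k(l_2)$, giving $d(l_1, l_2) \ge f(l_1, l_2) \ge 1/k$.

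The one subtle point is completeness of $d$: a $d$-Cauchy sequence $(l_n)$ is in particular $d_1$-Cauchy and hence $\tau$-converges to some $l$, and since each $c_k$ is $\tau$-continuous, eventually $c_k(l_n) = c_k(l)$, which forces $f(l_n, l) \to 0$ and therefore $d(l_n, l) \to 0$. This is precisely why one must insist that $d_1$ be a complete metric for the \emph{refined} topology $\tau$ rather than the original $\tau_0$: the cells $c_k^{-1}(i)$ are Borel but typically not $\tau_0$-closed, so a $\tau_0$-limit of a $d$-Cauchy sequence could escape them. Checking that $d$ is indeed a metric, that it induces $\tau$ (the $f$-topology is coarser than $\tau$, so the join is $\tau$), and that $\gamma$ is the required continuous injection is then routine.
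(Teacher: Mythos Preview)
Your argument is correct and follows essentially the same strategy as the paper: properly color the powers of the line graph, refine the topology to make the colorings continuous, and combine a compatible complete metric with a coloring-separation term. The only cosmetic differences are that the paper colors the $2^n$th powers and packages the result as the graph of the coloring map in $\Lab \times \omega^\omega$ with the averaged sum metric $\tfrac{1}{2}(d' + d_{\omega^\omega})$, whereas you color all $k$th powers and use a $\max$ metric directly on $\Lab$; both yield the stretched bound in the same way.
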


\begin{proof}
Fix a Polish edge labeling $\gamma : \Lab \rightarrow G$.
For every $n$, let $G_{\Lab}^{(n)}$ be the undirected graph with edge $(l_1, l_2)$ iff $l_1$ and $l_2$ have path-distance at most $2^n$ in the edge graph $G_{\Lab}$.
Notice that each $G_{\Lab}^{(n)}$ is Borel and locally-finite, so fix proper colorings $c^{(n)} : \Lab \rightarrow \omega$ of $G_{\Lab}^{(n)}$ for each $n$. 
Fix a compatible finer Polish topology on $\Lab$ making each $c^{(n)}$ continuous, and fix a compatible complete metric $d'$ on $\Lab$ inducing this topology.
We can ensure $d' \le 1$ by replacing $d'$ with the compatible complete metric defined by declaring the distance of any two $\ell_1, \ell_2 \in \Lab$ to be
\[ \frac{d'(\ell_1, \ell_2)}{d'(\ell_1, \ell_2) + 1} .\]
The map $x \mapsto (c^{(n)})_{n < \omega}$ is continuous from $\Lab$ to $\omega^\omega$. 
Take $\Lab'$ to be the (closed) graph of this function, and $\gamma'$ to be the projection onto the first coordinate, composed with $\gamma$. 
On $\Lab'$ we have the metric 
\[d((l_1, s_1), (l_2, s_2)) = \frac{1}{2}[d'(l_1, l_2) + d_{\omega^\omega}(s_1, s_2)],\]
where $d_{\omega^\omega}((a_n), (b_n)) = \sum \{1/{2^n} \mid a_n \neq b_n\}$ is the usual complete metric on $\omega^\omega$. 
It is easy to see $\gamma' : \Lab' \rightarrow G$ is stretched with respect to this metric and $d \le 1$.
\end{proof}

\begin{lemma}\label{lem:discrete_path_labels}
Suppose that $\gamma : \Lab \rightarrow G$ is a Polish labeling of a graph living on $X$, which is stretched with respect to a compatible complete metric $d \le 1$. Assume furthermore that $G$ is acyclic. Then for any $x \in X$, the set of path labels starting at $x$ is closed in $B(\Lab)$. In fact, they are uniformly discrete.
\end{lemma}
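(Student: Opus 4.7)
The plan is to show that any two distinct path labels starting at $x$ differ by at least $\epsilon/2$ in norm, where $\epsilon > 0$ is the stretching constant (which we may assume is $\le 1$). Since a uniformly discrete subset of a metric space is automatically closed, this suffices for both conclusions.

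The first step is to reduce to bounding $\|p_{y,z}\|$ for $y \ne z$ in the same component. By the closure property $p_{x,z} = p_{x,y} + p_{y,z}$ remarked right after Lemma \ref{lem:path_labels_borel}, if $p, q$ are distinct path labels at $x$ with endpoints $y, z$ respectively, then by uniqueness of path labels in acyclic graphs $y \ne z$, and $p - q = -p_{y,z}$, so $\|p - q\| = \|p_{y,z}\|$. (The edge case $p = 0 = p_{x,x}$ is subsumed by taking $y = x$.)

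The second step is to lower-bound $\|p_{y,z}\|$ via the stretched condition and the inequality (\ref{eq:lower_bound}). Write $p_{y,z} = \sum_{i=1}^n \lambda_i \ell_i$ along the unique simple $y$-to-$z$ path $y = v_0, v_1, \ldots, v_n = z$. Acyclicity forces these vertices to be distinct, so for $i < j$ the edges $\ell_i$ and $\ell_j$ are connected in the edge graph $G_{\Lab}$ by the intermediate $\ell_{i+1}, \ldots, \ell_{j-1}$, yielding $G_{\Lab}$-path-distance at most $j - i$. The stretched hypothesis then gives $d(\ell_i, \ell_j) \ge \epsilon/(j-i) \ge \epsilon/(n-1)$. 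Feeding this into (\ref{eq:lower_bound}) yields
\[ \|p_{y,z}\| \ge \tfrac{1}{2} n \cdot \tfrac{\epsilon}{n-1} \ge \epsilon/2 \qquad (n \ge 2). \]

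The main (minor) obstacle is the degenerate case $n = 1$, where (\ref{eq:lower_bound}) is vacuous since there is only one summand. Here $p_{y,z} = \pm \ell$ for a single basis element $\ell \in \Lab$, and I would note that $\|\ell\|_{B(\Lab)} = 1$: this follows from $\|\ell - *\| = d(\ell, *) = 1$ together with $\|*\|_{B(\Lab)} = 0$, where the latter is immediate from the norm definition by taking $x_1 = y_1 = *$ and using $d(*,*) = 0$. Hence $\|p_{y,z}\| = 1 \ge \epsilon/2$ in this case too. Combining both cases, the set of path labels at $x$ is $\epsilon/2$-separated, hence uniformly discrete and closed.
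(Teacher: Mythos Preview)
Your proof is correct and follows essentially the same approach as the paper's: reduce to bounding the norm of the difference $p_2 - p_1$, observe this is itself a path label, and combine the stretched hypothesis with inequality~(\ref{eq:lower_bound}) to get the uniform $\epsilon/2$ lower bound. You are in fact more careful than the paper, which tacitly divides by $n-1$ without treating the single-edge case $n=1$ that you handle separately.
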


\begin{proof}
Fix some $\epsilon > 0$ from the stretchedness of the labeling.
Let $p_1$ be a path label from $x$ to $y$ and $p_2$ be a path label from $x$ to $y'$. Then $p_2 - p_1$ is a path label from $y$ to $y'$, and it does not re-use the same edge. Thus by Equation \ref{eq:lower_bound} we have
\[ \| p_2 - p_1 \| \ge (n/2) \cdot \min\{d(l_1, l_2) \mid l_1, l_2 \in \supp(p_2-p_1)\}\]
where $n$ is the length of the path $p_2-p_1$ (number of edges used). By stretchedness, we have $d(l_1, l_2) \ge \epsilon \cdot 1/(n-1)$ for every $l_1, l_2$ in the support of the path label $p_2-p_1$. Thus we have
\begin{align*}
    \|p_2 - p_1 \| &\ge (n/2) \cdot \min\{d(l_1, l_2) \mid l_1, l_2 \in \supp(p_2-p_1)\}\\
    &\ge (n/2) \cdot (\epsilon / (n-1)) \\
    &\ge \epsilon/2.
\end{align*}
This holds for any path labels $p_1$ and $p_2$ starting at $x$, and thus the set is uniformly discrete.
\end{proof}

\subsection{The reduction}
Suppose $G$ is a locally-finite acyclic graph on Polish $X$ with Polish edge labeling $\gamma : \Lab \rightarrow G$, and furthermore by Lemma \ref{lem:stretched_labeling} assume the labeling is stretched with respect to a compatible complete metric $d \le 1$ on $\Lab$.

Consider the map 
\[X \rightarrow \mathcal{F}(B(X) \times X), \quad x \mapsto \{(p_{x, y}, y) \mid y \mathrel{E_G} x\}.\]
By Lemma \ref{lem:discrete_path_labels}, this is well-defined.
To see that it is Borel, first observe that by Feldman-Moore there is a Borel function $f : X \rightarrow X^{\le \omega}$ such that $f(x)$ enumerates $[x]_{E_G}$. 
By Lemma \ref{lem:path_labels_borel}, there is a Borel function $g : X \rightarrow B(\Lab)^{\le \omega}$
such that $g(x)$ enumerates the set of paths starting at $x$. 
Finally, by Lemma \ref{lem:effros_borel}, the desired map is Borel.

Now consider the action
\[ a : B(\Lab) \curvearrowright \mathcal{F}\left(B(\Lab) \times X\right)\]
defined by
\[g \cdot C = \{(g + h, x) \mid (h, x) \in C\}.\]
It's easy to check that this is well-defined and Borel.

\begin{claim}
The map $f = x \mapsto \{(p_{x, y}, y) \mid y \mathrel{E_G} x\}$ is a reduction from $E_G$ to $E_a$.
\end{claim}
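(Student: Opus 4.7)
The plan is to verify both implications of the claim: forward, that $x \mathrel{E_G} x'$ implies $f(x) \mathrel{E_a} f(x')$, and backward, that $f(x) \mathrel{E_a} f(x')$ implies $x \mathrel{E_G} x'$. The key algebraic tool is the closure property of path labels remarked on after Lemma \ref{lem:path_labels_borel}: if $p_{u,v}$ and $p_{v,w}$ are the unique path labels between vertices in the same component, then $p_{u,v} + p_{v,w} = p_{u,w}$. Acyclicity (from the treeability assumption) is what makes these path labels unique and hence makes this additive cocycle identity sensible.

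For the forward direction, suppose $x \mathrel{E_G} x'$, so $[x]_{E_G} = [x']_{E_G}$. I will take $g := p_{x', x} \in B(\Lab)$ as the translating group element and check that $g \cdot f(x) = f(x')$. Indeed, for each $y \mathrel{E_G} x$ we have $g + p_{x,y} = p_{x',x} + p_{x,y} = p_{x',y}$, so
\[g \cdot f(x) = \{(p_{x', x} + p_{x,y},\, y) \mid y \mathrel{E_G} x\} = \{(p_{x', y},\, y) \mid y \mathrel{E_G} x'\} = f(x'),\]
as needed.

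For the backward direction, suppose $g \cdot f(x) = f(x')$ for some $g \in B(\Lab)$. The $B(\Lab)$-action translates only the first coordinate, so the projection of $g \cdot f(x)$ onto the second coordinate of $B(\Lab) \times X$ is the same as the projection of $f(x)$, namely $[x]_{E_G}$. On the other hand, the projection of $f(x')$ onto the second coordinate is $[x']_{E_G}$. Hence $[x]_{E_G} = [x']_{E_G}$, so $x \mathrel{E_G} x'$.

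There isn't really a serious obstacle here; the content of the claim is packaged into the earlier lemmas. The forward direction relies entirely on the cocycle identity for path labels (which holds thanks to acyclicity), while the backward direction is essentially free because the action acts trivially on the second coordinate — so the only mild point to stress is that one must actually observe this second-coordinate triviality rather than appealing to some finer invariant.
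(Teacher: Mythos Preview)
Your proof is correct and matches the paper's argument essentially line for line. The forward direction is identical (using the cocycle identity $p_{x',x} + p_{x,y} = p_{x',y}$), and your backward direction via second-coordinate projections is a clean rephrasing of the paper's observation that some pair $(p, x)$ must lie in $f(x')$, forcing $x \mathrel{E_G} x'$.
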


\begin{proof}
To see that it is a homomorphism, observe that if $x \mathrel{E_G} y$ and $p_{x, y}$ is the unique path from $x$ to $y$, then 
\[f(y) = \{(p_{y, z}, z) \mid z \mathrel{E_G} y\} = \{(p_{y, x} + p_{x, z}, z) \mid z \mathrel{E_G} y \mathrel{E_G} x\} = p_{y, x} + f(x).\]
On the other hand, if $f(x) \mathrel{E_a} f(y)$ then there must be some path label $p$ such that $(p, x) \in f(y)$, and thus $x \mathrel{E_G} y$.
\end{proof}

\begin{corollary}
Any countable treeable equivalence relation is classifiable by an abelian Polish group.
\end{corollary}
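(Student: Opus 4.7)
The plan is to derive the corollary directly from the claim just established. The claim shows that for any locally-finite acyclic Borel graph $G$ equipped with a stretched Polish edge labeling $\gamma : \Lab \to G$, the equivalence relation $E_G$ admits a Borel reduction into an orbit equivalence relation of the abelian Polish group $B(\Lab)$; by Lemma \ref{lem:stretched_labeling}, such a labeling exists for every locally-finite Borel graph. Thus the only missing ingredient is to pass from an arbitrary Borel treeing of a countable Borel treeable equivalence relation to a locally-finite one.

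Given a countable Borel treeable equivalence relation $E$ on a Polish space $X$, by definition $E$ admits a Borel treeing $T$. I would handle a possibly non-locally-finite $T$ by a \emph{subdivision} construction: Luzin--Novikov uniformization lets one Borel-enumerate each vertex's $T$-neighbors, and one then replaces each $x \in X$ with a ray $x = (x, 0), (x, 1), (x, 2), \ldots$ living in the enlarged Polish space $X' = X \sqcup (X \times \omega)$, rerouting the edge of $T$ between $x$ and its $n$-th neighbor $w$ as an edge in the new graph $T'$ between $(x, n)$ and $(w, m)$, where $x$ is $w$'s $m$-th neighbor. The resulting $T'$ is a locally-finite Borel acyclic graph (each $(x, n)$ has degree at most $3$), and the inclusion $X \hookrightarrow X'$ is a Borel reduction from $E$ to $E_{T'}$, since two points of $X$ lie in the same $T$-component iff they lie in the same $T'$-component.

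Applying Lemma \ref{lem:stretched_labeling} to $T'$ to obtain a stretched Polish edge labeling, and then invoking the claim, gives a Borel reduction of $E_{T'}$ to an orbit equivalence relation of the abelian Polish group $B(\Lab)$. Composing with the inclusion $X \hookrightarrow X'$ produces a Borel reduction of $E$ to an orbit equivalence relation of $B(\Lab)$, witnessing classifiability by an abelian Polish group as claimed.

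The main obstacle is verifying that the subdivision construction really is Borel and really produces an acyclic graph whose induced equivalence relation restricted to the copy of $X$ equals $E$. These are routine once the Borel enumeration of neighbors is in hand (acyclicity follows from the observation that contracting each ray back to its basepoint recovers $T$, so any cycle in $T'$ would project to a closed non-backtracking walk in $T$). After that, everything is a straightforward composition of the claim and Lemma \ref{lem:stretched_labeling}.
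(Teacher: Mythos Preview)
Your proof is correct and follows essentially the same route as the paper: reduce an arbitrary treeable CBER to the connectivity relation of a locally-finite acyclic Borel graph, then apply Lemma~\ref{lem:stretched_labeling} and the Claim. The only difference is that the paper outsources the passage to a locally-finite treeing to a citation of \cite{JKL2002}, whereas you supply an explicit subdivision construction; your version is thus more self-contained but otherwise identical in structure.
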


\begin{proof}
Any countable treeable equivalence relation is Borel-reducible to $E_G$ for some locally-countable Borel acyclic graph $G$ living on some Polish space $X$ (see \cite{JKL2002}). By Lemma \ref{lem:stretched_labeling}, we can find a Polish edge labeling $\gamma : \Lab \rightarrow G$ which is stretched with respect to a compatible complete metric $d \le 1$ on $\Lab$. As we just saw, we get that $E_G \le_B E_a$ for the described action.
\end{proof}

By \cite[Theorem 4.3]{GaoPestov2003}, every abelian Polish group is involved by $\ell_1$, so we recover the result from the title of this paper by Fact \ref{fact:mackey-hjorth} (Mackey-Hjorth).

\section{Actions of countable products of locally compact abelian Polish groups}\label{sec:lca_classifiable}

In this section we prove that countable Borel equivalence relations classifiable by $\R^\omega$ are hyperfinite.

We recall the following two results:

\begin{proposition}[Kechris \cite{Kechris1992}]\label{prop:kechris_ess_ctbl}
Suppose $G$ is a locally-compact Polish group and $a : G \curvearrowright X$ is a Borel action on a standard Borel space. Then $E_a$ is essentially countable.
\end{proposition}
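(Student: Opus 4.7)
The plan is to reduce $E_a$ to a countable Borel equivalence relation by producing a Borel complete section that meets every $E_a$-orbit in a countable set. First I would invoke the Becker-Kechris theorem to replace the given Borel action by a topologically equivalent continuous action of $G$ on a Polish space, since changing the Polish topology preserves the Borel structure and leaves $E_a$ unchanged. Since $G$ is a locally compact Polish group, it is automatically $\sigma$-compact, so I can fix a compact symmetric neighborhood $U$ of the identity with $G = \bigcup_{n \in \omega} U^n$, each $U^n$ compact.

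The key step is then to construct a Borel set $B \subseteq X$ meeting every orbit such that $B \cap Kx$ is finite for every $x \in X$ and every compact $K \subseteq G$. The idea is to build $B$ as a \emph{lacunary} section: fix a compatible Polish metric $\rho$ on $X$, and use the continuity of the action to pick along each orbit points that are bounded away from one another in $\rho$, at a scale forcing only finitely many selected points to sit in any compact tube $Kx$. Making this recursion Borel is the technical core; it is carried out using Vaught transforms together with a Kuratowski-Ryll-Nardzewski-style selection applied to the Effros-Borel space of closed subsets of $X$ (which is well-behaved under the induced $G$-action, as recalled in the preliminaries).

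Once $B$ is in hand, the decomposition $B \cap [x]_{E_a} = \bigcup_n (B \cap U^n x)$ exhibits the intersection as a countable union of finite sets, so $E_a \upharpoonright B$ is a countable Borel equivalence relation. Finally, the Borel set $\{(x, b) \in X \times B : b \in [x]_{E_a}\}$ has countable nonempty vertical sections, so by Lusin-Novikov uniformization it admits a Borel uniformizing function $\pi : X \to B$ with $\pi(x) \mathrel{E_a} x$. This $\pi$ is the required Borel reduction from $E_a$ to the countable Borel equivalence relation $E_a \upharpoonright B$, establishing essential countability.

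The main obstacle is producing the lacunary section $B$ as a \emph{Borel} set rather than merely an analytic one: the interplay between local compactness of $G$, the continuity of the action, and the Borel selection required to run the recursion in a definable way is the nontrivial heart of the argument. Once $B$ is constructed, the remaining bookkeeping (countability of $B \cap [x]_{E_a}$ and Borel uniformization) is routine.
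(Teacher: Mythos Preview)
Your sketch is correct and follows the standard route from Kechris's original argument: pass to a continuous action, exploit $\sigma$-compactness to build a Borel lacunary complete section, observe that the restricted equivalence relation is countable, and finish with Lusin--Novikov uniformization. The paper itself does not supply a proof of this proposition; it is simply quoted as a known result with a citation, so there is no in-paper argument to compare against.
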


\begin{proposition}[Cotton \cite{Cotton2019}]\label{prop:cotton_ess_ctbl}
Suppose $G$ is a locally-compact abelian Polish group and $a : G \curvearrowright X$ is a Borel action on a standard Borel space. Then $E_a$ is essentially hyperfinite.
\end{proposition}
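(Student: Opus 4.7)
The strategy is to combine Proposition \ref{prop:kechris_ess_ctbl} with the Pontryagin--van Kampen structure theorem for locally compact abelian (LCA) Polish groups and the Gao--Jackson theorem that CBERs induced by Borel actions of countable abelian groups are hyperfinite. By Proposition \ref{prop:kechris_ess_ctbl}, $E_a$ is already essentially countable; inspecting Kechris's proof, this is witnessed by the restriction $E_a \upharpoonright Y$ for some Borel complete section $Y \subseteq X$ obtained from a local cross-section of $G$ near the identity. The task therefore reduces to showing that this induced CBER is hyperfinite.

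For this, I would exploit the fact that every LCA Polish group $G$ has an open, compactly generated subgroup $G_0$, which by Pontryagin--van Kampen has the form $\mathbb{R}^a \times \mathbb{Z}^b \times K$ with $K$ compact abelian, and that the quotient $G/G_0$ is a countable discrete abelian group. First, the $K$-orbits are compact, so the $K$-action is smooth, and a Borel transversal for $K$ replaces the $G$-action by the action of the quotient group $G/K$ on a Borel set, which now has no nontrivial compact part. Next, a Borel marker / fundamental-domain argument (of the kind used to show hyperfiniteness for $\mathbb{R}$-flows) produces a Borel cross-section inside each $\mathbb{R}^a$-orbit meeting it in a copy of the integer lattice, effectively replacing the continuous $\mathbb{R}^a$-factor by its discretization $\mathbb{Z}^a$. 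The result of these two reductions is a CBER on a refined Borel complete section that is induced by a Borel action of the countable abelian group $\mathbb{Z}^a \times \mathbb{Z}^b \times (G/G_0)$, to which Gao--Jackson applies, yielding hyperfiniteness; one then transfers this back along the chain of Borel reductions.

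The main obstacle is ensuring that the composition of these reductions remains Borel and that the final equivalence relation is induced by an honest Borel action of a \emph{fixed} countable abelian group, rather than being merely bi-reducible with one. The delicate step is the Borel fundamental-domain construction for the continuous $\mathbb{R}^a$-factor: one must choose, in a Borel-uniform way across orbits, a translate of a fixed lattice so that the resulting section carries a well-defined Borel $\mathbb{Z}^a$-action, and one must simultaneously track the interplay with the smooth quotient by $K$ and the coset structure modulo $G_0$. Abelian-ness of $G$ is used crucially here, so that the smoothing, marker, and coset-selector constructions associated to the $K$-, $\mathbb{R}^a$-, and $G/G_0$-factors commute, allowing them to be performed independently without the corresponding translations interfering with one another.
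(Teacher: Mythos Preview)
The paper does not prove this proposition; it is quoted as a result of Cotton \cite{Cotton2019} and used as a black box in Section~\ref{sec:lca_classifiable}. So there is no ``paper's own proof'' to compare your attempt against.

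That said, your outline is broadly the strategy of Cotton's paper: use the structure theory of LCA Polish groups to reduce to the building blocks $\mathbb{R}^n$, a compact abelian $K$, and a countable discrete abelian quotient, with smoothness disposing of $K$ and Gao--Jackson handling the discrete part. The one place your sketch is genuinely soft is the ``discretize $\mathbb{R}^a$ to $\mathbb{Z}^a$'' step. For $a=1$ this is the Borel cross-section theorem for flows (Wagh, building on Ambrose--Kakutani), which really does yield an honest $\mathbb{Z}$-action via first return. For $a>1$, however, a Borel complete section meeting each orbit in a translate of a \emph{fixed} lattice --- so that the induced CBER is literally generated by a $\mathbb{Z}^a$-action --- is not something you get for free; a generic Borel cross-section only gives a CBER bireducible with the original, not one generated by a prescribed countable abelian group, and Gao--Jackson needs the latter. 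One instead handles $\mathbb{R}^n$ by iterating the one-dimensional cross-section argument and using closure of hyperfiniteness under extensions, rather than by manufacturing a single global $\mathbb{Z}^a$-action. Your final paragraph already flags exactly this as the obstacle; as written, the proposal is a correct road map but that step is where the real work hides.
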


Recall that an equivalence relation $E$ is essentially countable iff it is Borel reducible to a countable Borel equivalence relation, and essentially hyperfinite iff it is Borel reducible to a hyperfinite equivalence relation.

Given a continuous action $a : G \curvearrowright X$ for a Polish group $G$ and Polish space $X$, we recall the relation $\le_2$ on pairs $(x, U)$ for $x \in X$ and open $U \subseteq G$ from the Hjorth analysis of Polish group actions.
Write $(x, U) \le_2 (y, V)$ iff for every $U' \subseteq U$ open and nonempty, there is $V' \subseteq V$ open nonempty such that $V'y \subseteq \overline{U'y}$.
Finally, write $x \sim_1 y$ iff $(x, G) \le_2 (y, G)$ and $(y, G) \le_2 (x, G)$.
It is easy to check that $x \sim_1 y$ implies that $\overline{G \cdot x} = \overline{G \cdot y}$.

One can consult \cite{Drucker} or \cite{Allisontsi} for general references on the Hjorth analysis.
However, the next lemma isolates the particular fragment of the Hjorth analysis of Polish groups that we will need.

\begin{lemma}\label{lem:equiv}
Suppose $A, B \subseteq X$ are $G$-invariant $\BPi^0_3$ sets, and there exists $x \in A$ and $y \in B$ satisfying $x \sim_1 y$. Then $A$ and $B$ must intersect.
\end{lemma}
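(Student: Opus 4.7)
The plan is to reduce the lemma to the following key property: every $G$-invariant $\BPi^0_3$ subset of $X$ is a union of $\sim_1$-equivalence classes. Granting this, since $x \in A$ and $x \sim_1 y$, we obtain $y \in A$, and hence $y \in A \cap B \neq \emptyset$.

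The starting observation is that for any closed set $F \subseteq X$, the Vaught-type transform
\[F^{\triangle G} := \{z \in X : \exists V \subseteq G \text{ open nonempty, } Vz \subseteq F\}\]
is $\sim_1$-invariant. Indeed, if $z \in F^{\triangle G}$ with witness $V$, and $z \sim_1 w$, then $(z, G) \le_2 (w, G)$ applied with $U' := V$ yields a nonempty open $V' \subseteq G$ with $V'w \subseteq \overline{Vz} \subseteq F$, so $w \in F^{\triangle G}$. Taking countable unions, $S^{\triangle G} = \bigcup_m F_m^{\triangle G}$ is $\sim_1$-invariant for any $\BSigma^0_2$ set $S = \bigcup_m F_m$.

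Now write the $G$-invariant $\BPi^0_3$ set $A$ as $A = \bigcap_n S_n$ with $S_n \in \BSigma^0_2$. A Baire-category argument in $G$ shows that $x \in A$ implies $x \in S_n^{\triangle G}$ for each $n$: by $G$-invariance we have $Gx \subseteq A \subseteq S_n$, and writing $S_n = \bigcup_m F_{n,m}$ with $F_{n,m}$ closed, the Polish group $G$ is covered by the closed sets $\{g : gx \in F_{n,m}\}$, so by Baire some one of these contains a nonempty open $V$, giving $Vx \subseteq F_{n,m}$. Consequently, by the $\sim_1$-invariance of the Vaught transforms just established, $y \in \bigcap_n S_n^{\triangle G}$.

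The final step---bridging from $y \in \bigcap_n S_n^{\triangle G}$ to $y \in A$---is the main obstacle, since the $S_n$'s are not themselves $G$-invariant and so $y \in S_n^{\triangle G}$ does not directly yield $y \in S_n$. Closing this gap requires the Hjorth analysis machinery of \cite{Drucker} or \cite{Allisontsi}: using both directions of $\sim_1$ together with an iterated Baire-category argument that coordinates witnesses across the $S_n$'s, one refines the decomposition of $A$ into pieces whose Vaught transforms are manifestly $\sim_1$-invariant and whose countable intersection is exactly $A$, thereby completing the proof.
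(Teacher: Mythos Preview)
Your proposal has a genuine gap at exactly the point you flag as ``the main obstacle.'' You correctly establish that $F^{\Delta G}$ is $\sim_1$-invariant for closed $F$, and that $x \in A$ forces $x \in \bigcap_n S_n^{\Delta G}$ via Baire category in $G$; hence $y \in \bigcap_n S_n^{\Delta G}$. But from there you need $y \in A$ (or at least $A \cap B \neq \emptyset$), and your final paragraph only gestures at ``Hjorth analysis machinery'' and an ``iterated Baire-category argument that coordinates witnesses across the $S_n$'s'' without saying what these actually are or how they work. That is not a proof; it is a description of what a proof would have to accomplish. Note too that your target---that every $G$-invariant $\BPi^0_3$ set is a union of $\sim_1$-classes, so that $y$ itself lies in $A$---is strictly stronger than what the lemma asserts.

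The paper sidesteps this difficulty entirely: it never tries to put $y$ into $A$. Instead, it invokes Hjorth's sharp change-of-topology theorem to pass to a finer Polish $G$-space topology $\tau^*$ in which each $A_{n,m}^{\Delta V}$ (and likewise for the $B_{n,m}$) becomes open; the Vaught-transform identity $A = \bigcap_n \bigcap_V \bigcup_m A_{n,m}^{\Delta V}$ then makes both $A$ and $B$ genuinely $G_\delta$ in $\tau^*$. Your observation that $C^{\Delta G}$ is $\sim_1$-invariant for closed $C$ is exactly what the paper uses, but only to conclude that $G\cdot x$ and $G\cdot y$ have the \emph{same $\tau^*$-closure}. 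Since $A$ and $B$ are each dense $G_\delta$ in this common Polish space (containing $Gx$ and $Gy$ respectively), Baire category gives a point of $A \cap B$. The argument produces an intersection point without ever claiming $y \in A$, and the change of topology is precisely the missing ingredient that converts ``$y \in \bigcap_n S_n^{\Delta G}$'' into something usable.
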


\begin{proof}

Let $\tau$ denote the original Polish $G$-space topology on $X$.
Suppose some $x, y \in X$ satisfy $(x, G) \le_2 (y, G)$ and $(y, G) \le_2 (x, G)$. 
Since $[x]_F$ and $[y]_F$ are both $\BPi^0_3$, we can write $[x]_F = \bigcap_n \bigcup_m A_{n, m}$ and $[y]_F = \bigcap_n \bigcup_m B_{n, m}$ where $A_{n, m}$ are $B_{n, m}$ are closed.

By basic properties of the Vaught transform (see \cite{BeckerKechris1996} for a definition and properties), we may write

\[
[x]_F = [x]_F^{*G}= \bigcap_n \Big[\bigcup_m A_{n, m}\Big]^{*G} = \bigcap_n \bigcap_V \Big[\bigcup_m  A_{n, m}\Big]^{\Delta V}= \bigcap_n \bigcap_V \bigcup_m  A_{n, m}^{\Delta V}
\]

and similarly
\[ [y]_F = \bigcap_n \bigcap_V \bigcup_m B_{n, m}^{\Delta V}\]
where $V$ ranges over basic open subsets of $G$.

By a sharp analysis due to Hjorth of Polish changes of topology (see \cite[Theorem 4.3.3]{Gao2008}), there is a finer Polish topology $\tau^*$ on $X$, generated by a countable family of sets that are either open in $\tau$ or of the form $C^{\Delta V}$ where $C$ is closed in $\tau$ and $V \subseteq G$ basic open, in which each $A_{n, m}^{\Delta V}$ is open.
In particular, $[x]_F$ and $[y]_F$ are $G_\delta$ in $\tau^*$. 
Thus we just have to show that $G \cdot x$ and $G \cdot y$ have the same closure in $\tau^*$, as this would mean that in the topology $\tau^*$, both $[x]_F \cap \overline{G \cdot x}$ and $[y]_F \cap \overline{G \cdot y}$ are dense $G_\delta$ subsets of $\overline{G \cdot x} = \overline{G \cdot y}$.
It is enough to show then that $x \in C^{\Delta G}$ iff $y \in C^{\Delta G}$ for any $\tau$-closed $C \subseteq X$.

Suppose first $x \in C^{\Delta G}$. 
Then there is some $g \in G$ and some symmetric open neighborhood $V$ of the identity of $G$ such that $g \cdot x \in C^{*V}$.
In fact, this means $\overline{Vg \cdot x} \subseteq C$.
Because $(x, G) \le_2 (y, G)$, there is some nonempty open $V' \subseteq G$ such that $\overline{V' \cdot y} \subseteq \overline{Vg \cdot x}$. 
In particular, 
\[V' \cdot y \subseteq \overline{Vg \cdot x} \subseteq C\]
and so $y \in C^{*V'}$ which implies $y \in C^{\Delta G}$.
The other direction is the same.
\end{proof}

We now use this fact to prove that any $\BPi^0_3$ orbit equivalence relation induced by an action of a countable product of Polish groups is in fact Borel reducible to a countable product of orbit equivalence relations, each induced by only finitely-many Polish groups from the sequence.

\begin{proposition}\label{prop:prodn_reduction}
Suppose $G = \prod_n G_n$ is a countable product of Polish groups and
\[a : \prod_n G_n \curvearrowright Z\]
is a continuous action on a Polish space $Z$.
Then there are continuous actions
\[b_n : \prod_{k \le n} G_k \curvearrowright Y_n\]
on Polish spaces $Y_n$ and a Borel function $g : Z \rightarrow \prod_n Y_n$ which is a Borel homomorphism from $E_a$ to $\prod_n E_{b_n}$ satisfying the additional property that for any $G$-invariant $\BPi^0_3$ sets $A, B \subseteq Z$ if there exist $x \in A$ and $y \in B$ satisfying $g(x) \mathrel{\prod_n E_{b_n}} g(y)$ then $A$ and $B$ must intersect.
\end{proposition}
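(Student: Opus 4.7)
The plan is to use the action of $G$ on the Effros--Borel space $\mathcal{F}(Z)$ to ``forget'' successive tail factors, and to recover the $\BPi^0_3$ intersection property by falling back on Lemma \ref{lem:equiv}. Let $H_n = \prod_{k > n} G_k$, so that $G = \prod_{k \le n} G_k \times H_n$ as a Polish group, and let $Y_n$ be $\mathcal{F}(Z)$ equipped with the $\prod_{k \le n} G_k$-action inherited from the Borel $G$-action on $\mathcal{F}(Z)$, refined via Becker--Kechris to a Polish action $b_n$. Define $g_n(x) = \overline{H_n \cdot x}$ and $g = (g_n)_n$. Borelness of each $g_n$ follows by fixing a countable dense $D_n \subseteq H_n$, writing $g_n(x) = \overline{D_n \cdot x}$, and invoking Lemma \ref{lem:effros_borel}. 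For the homomorphism property, writing $h = h_{\le n} h_{> n} \in G$ with $h_{\le n} \in \prod_{k \le n} G_k$ and $h_{> n} \in H_n$, commutation of the two factors together with $H_n h_{> n} = H_n$ gives $\overline{H_n \cdot (h \cdot x)} = h_{\le n} \cdot \overline{H_n \cdot x}$, which lies in the same $b_n$-orbit as $g_n(x)$.

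For the main property, suppose $x \in A$, $y \in B$ with $A, B$ being $G$-invariant $\BPi^0_3$ and $g(x) \mathrel{\prod_n E_{b_n}} g(y)$. I would show $x \sim_1 y$ for the original $G$-action on $Z$ and then apply Lemma \ref{lem:equiv} to conclude $A \cap B \neq \emptyset$. Unpacking: for each $n$ there is $h^{(n)} \in \prod_{k \le n} G_k$ with $\overline{H_n \cdot y} = h^{(n)} \cdot \overline{H_n \cdot x}$, whence $y \in \overline{H_n h^{(n)} \cdot x}$. To check $(x, G) \le_2 (y, G)$ it suffices to treat a basic open $U' = U'_{\le n} \times H_n \subseteq G$; set $V' = U'_{\le n} (h^{(n)})^{-1} \times H_n$, which is open and nonempty in $G$, and compute, using commutation of $\prod_{k \le n} G_k$ with $H_n$ together with $H_n H_n = H_n$:
\[
V' \cdot y \;\subseteq\; V' \cdot \overline{H_n h^{(n)} \cdot x} \;\subseteq\; \overline{V' H_n h^{(n)} \cdot x} \;=\; \overline{U' \cdot x}.
\]
The reverse direction is symmetric, so $x \sim_1 y$ as desired.

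The main obstacle I anticipate is this last computation --- keeping track of the basic-open-set shape in $\prod_k G_k$ and using the commutation between $\prod_{k \le n} G_k$ and $H_n$ at just the right places. Everything else (Borelness of $g$, the homomorphism property, and the final application of Lemma \ref{lem:equiv}) is routine bookkeeping once the product decomposition of $G$ is exploited.
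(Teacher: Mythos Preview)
Your argument is correct and shares the paper's overall strategy: construct tail-quotient invariants, show that equivalence of these invariants forces $x \sim_1 y$, and then invoke Lemma~\ref{lem:equiv}. Where you diverge is in the choice of target. You take $Y_n = \mathcal{F}(Z)$ with the inherited $\prod_{k\le n} G_k$-action and set $g_n(x) = \overline{H_n \cdot x}$; the paper instead takes $Y_n = \mathcal{F}(G)^\omega$ with $\prod_{k\le n} G_k$ acting by right translation, and for each basic open $U_k \subseteq Z$ records the closure in $G$ of $\{g \in G : H_n g \cdot x \cap U_k \neq \emptyset\}$. Your invariant is strictly coarser (one recovers $\overline{H_n\cdot x}$ from the paper's data by reading off which $U_k$ have $1_G$ in the associated closed set), yet your verification that $V'\cdot y \subseteq \overline{U'\cdot x}$ goes through using only the orbit-closure equality $\overline{H_n\cdot y} = h^{(n)}\cdot\overline{H_n\cdot x}$ together with commutation of the two factors, so nothing is lost for this statement. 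The upshot is a shorter, more transparent proof; the paper's version, which encodes data in $\mathcal{F}(G)$ rather than $\mathcal{F}(Z)$, sits closer to the general Hjorth-analysis formalism but does not buy additional leverage here.
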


\begin{proof}
For each $n$, let $H_{>n}$ be the closed normal subgroup of $G$ of sequences where $g_k = 1_{G_k}$ for each $k > n$ and $H_{\le n}$ be the closed normal subgroup of $G$ of sequences where $g_k = 1_{G_k}$ for each $k \le n$.

For each $n$, let $Y_n = \mathcal{F}(G)^{\omega}$ and let $b_n : H_{>n} \curvearrowright Y_n$ be the continuous action by right-translation, i.e.
\[ h \cdot \langle C_{k} \mid k \in \omega \rangle = \langle C_{k} h^{-1} \mid k \in \omega\rangle. \]
Let $U_k$ for $k \in \omega$ enumerate a basis for the topology on $Z$ and define $f_n : Z \rightarrow Y_n$ to be the Borel function
\[f_n(x) = \langle \overline{\{g \in G \mid H_{\le n}g \cdot x \cap U_k \neq \emptyset\}} \mid k \in \omega \rangle\]
This is easily seen to be a homomorphism from $E_a$ to $E_{b_n}$.

Thus the Borel map
\[ \bar{f} : Z \rightarrow \prod_n Y_n\]
defined by
\[ \bar{f}(x) = \langle f_n(x) \mid n \in \omega\rangle\]
is a Borel homomorphism from $E_a$ to $\prod_n E_{b_n}$.

Now let $A, B \subseteq X$ be $G$-invariant and $\BPi^0_3$ and suppose $x \in A$ and $y \in B$ satisfy $g(x) \mathrel{\prod_n E_{b_n}} g(y)$.
By Lemma \ref{lem:equiv} it is enough to see that $x \sim_1 y$.

Let $V \subseteq G$ be an arbitrary open set.
Let $n$ be large enough such that $V = VH_{\le n} = H_{\le n} V$.
By (3), there is some $\hat{h} \in H_{> n}$ such that for every $k$,
\[\overline{\{g \in G \mid H_{\le n}g \cdot y \cap U_k \neq \emptyset\}}\hat{h}^{-1} =\overline{\{g \in G \mid H_{\le n}g \cdot x \cap U_k \neq \emptyset\}}\]
or equivalently
\begin{equation}
\overline{\{g \in G \mid H_{\le n}g\hat{h} \cdot y \cap U_k \neq \emptyset\}} =\overline{\{g \in G \mid H_{\le n}g \cdot x \cap U_k \neq \emptyset\}}. \tag{*}
\end{equation}

We argue that $V\hat{h} \cdot y \subseteq \overline{V \cdot x}$.
Fix any $v \in V$ and fix an arbitrary basic open neighborhood $U_k$ of $v\hat{h} \cdot y$.
In particular, this means $v$ is in the left-hand side of $(*)$, and thus is in the right-hand side.
This means there is some $v' \in V$ such that $H_{\le_n}v' \cdot x \cap U_k \neq \emptyset$.
Because $H_{\le_n}v' \subseteq V$ and $v$ was chosen arbitrarily, we have that $V\cdot x \cap U_k \neq \emptyset$ as desired.
\end{proof}

Applying Propositions \ref{prop:kechris_ess_ctbl} and \ref{prop:cotton_ess_ctbl}, we get the following:

\begin{corollary}
If $G$ is a countable product of locally-compact Polish groups and $a : \prod_n G \curvearrowright X$
is a continuous action on a Polish space $X$ such that $E_a$ is $\BPi^0_3$, then $E_a \le_B E_\infty^\omega$. If $G$ is moreover a countable product of locally-compact abelian Polish groups, then $E_a \le_B E_0^\omega$.
\end{corollary}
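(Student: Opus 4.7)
The plan is to extract the corollary directly from Proposition \ref{prop:prodn_reduction} together with Propositions \ref{prop:kechris_ess_ctbl} and \ref{prop:cotton_ess_ctbl}. First I would apply Proposition \ref{prop:prodn_reduction} to the action $a$ to obtain Polish actions $b_n : \prod_{k \le n} G_k \curvearrowright Y_n$ and a Borel homomorphism $g : X \to \prod_n Y_n$ from $E_a$ to $\prod_n E_{b_n}$ with the extra saturation property. Since each $G_k$ is locally compact, each finite product $\prod_{k \le n} G_k$ is locally compact; if moreover each $G_k$ is abelian then each $\prod_{k \le n} G_k$ is LCA.

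The key step is to upgrade $g$ from a homomorphism to a Borel reduction, using the $\BPi^0_3$ hypothesis on $E_a$. For any $x, y \in X$, the equivalence classes $[x]_{E_a}$ and $[y]_{E_a}$ are sections of $E_a$, hence $\BPi^0_3$ subsets of $X$, and they are visibly $G$-invariant (since $E_a$ is exactly the orbit equivalence relation of $a$). Thus if $g(x) \mathrel{\prod_n E_{b_n}} g(y)$, the extra conclusion of Proposition \ref{prop:prodn_reduction}, applied with $A = [x]_{E_a}$ and $B = [y]_{E_a}$, forces $[x]_{E_a} \cap [y]_{E_a} \ne \emptyset$, i.e.\ $x \mathrel{E_a} y$. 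Combined with the homomorphism property we get $E_a \le_B \prod_n E_{b_n}$.

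Next I would apply Proposition \ref{prop:kechris_ess_ctbl} to each $b_n$: since $\prod_{k \le n} G_k$ is locally compact, $E_{b_n}$ is essentially countable, hence $E_{b_n} \le_B E_\infty$. Taking the product of these reductions (which is Borel) gives $\prod_n E_{b_n} \le_B E_\infty^\omega$, and composing with the reduction from the previous paragraph yields $E_a \le_B E_\infty^\omega$. In the abelian case, Proposition \ref{prop:cotton_ess_ctbl} applies instead and each $E_{b_n}$ is essentially hyperfinite, so $E_{b_n} \le_B E_0$ and the same product construction gives $E_a \le_B E_0^\omega$.

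The main obstacle is really the verification in the previous step, namely that the homomorphism $g$ is actually a reduction. Once one recognizes that the $\BPi^0_3$ hypothesis on $E_a$ makes individual equivalence classes automatically $G$-invariant $\BPi^0_3$ sets, the saturation clause of Proposition \ref{prop:prodn_reduction} applies essentially verbatim and the rest is assembly of already-cited facts about countable products of (essentially) countable or hyperfinite Borel equivalence relations.
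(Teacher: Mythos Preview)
Your proposal is correct and matches the paper's intended argument: the paper states the corollary immediately after Proposition~\ref{prop:prodn_reduction} with only the one-line remark ``Applying Propositions \ref{prop:kechris_ess_ctbl} and \ref{prop:cotton_ess_ctbl}, we get the following,'' and you have spelled out precisely those details. In particular, your observation that the $\BPi^0_3$ hypothesis makes each $E_a$-class a $G$-invariant $\BPi^0_3$ set, so that the saturation clause of Proposition~\ref{prop:prodn_reduction} upgrades the homomorphism to a reduction, is exactly the intended use of that clause.
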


We can now conclude the main result of the section.

\begin{theorem}
Let $E$ be a countable Borel equivalence relation that is classifiable by a Polish group $G$ which is a countable product of locally-compact abelian Polish groups. Then $E_a$ is hyperfinite.
\end{theorem}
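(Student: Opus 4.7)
The plan is to combine the immediately preceding Corollary with the Hjorth-Kechris Sixth Dichotomy Theorem, along the lines sketched in the introduction.

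Since $E$ is classifiable by $G = \prod_n G_n$ (with each $G_n$ locally-compact abelian Polish), fix a Borel reduction $h : X \to Z$ from $E$ to $E_a$ for some continuous action $a : G \curvearrowright Z$ on a Polish space $Z$. Because $E$ is a countable Borel equivalence relation it is in particular essentially countable, and hence potentially $\BPi^0_3$ (in fact potentially $\BSigma^0_2$). Using the change-of-topology machinery for Polish $G$-spaces (e.g.\ \cite[Theorem 5.2.1]{BeckerKechris1996}, in the spirit of the Hjorth analysis fragment used in Lemma \ref{lem:equiv}), we plan to refine the Polish topology on $Z$---keeping the $G$-action continuous---so that $E_a$ becomes $\BPi^0_3$ in the new topology. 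Once this has been arranged, the immediately preceding Corollary gives $E_a \le_B E_0^\omega$, and composing with $h$ yields $E \le_B E_0^\omega$.

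Finally, to conclude that $E$ itself is hyperfinite, we invoke the Hjorth-Kechris Sixth Dichotomy Theorem. Since $E_1$ is not Borel-reducible to any orbit equivalence relation of a Polish group action (Kechris-Louveau), and $E$ is classifiable by $G$, we have $E_1 \not\le_B E$; combined with $E$ being potentially $\BSigma^0_2$, the dichotomy forces $E$ to be essentially hyperfinite, whence hyperfinite outright since $E$ is already a countable Borel equivalence relation (for which essential hyperfiniteness and hyperfiniteness coincide).

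The main obstacle will be the change-of-topology step: the potentially $\BPi^0_3$ property of $E$ must be transported through the reduction $h$ into a genuinely $\BPi^0_3$ orbit equivalence relation on a refined Polish $G$-space, while keeping the action continuous and controlling the Vaught transforms of the closed approximants of $E$. This is precisely the kind of bookkeeping that the Hjorth analysis framework appearing in Lemma \ref{lem:equiv} is designed to handle, so I expect this to go through without new ideas, but it is the delicate technical core of the argument.
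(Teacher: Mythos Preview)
Your overall architecture---reduce to $E_0^\omega$ and then apply the Sixth Dichotomy---matches the paper's, and your final paragraph is fine. The gap is in the middle step.

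You propose to refine the topology on $Z$ so that $E_a$ itself becomes $\BPi^0_3$, and then invoke the preceding Corollary. But the potential-$\BPi^0_3$ fact you cite is a property of $E$ on $X$, not of $E_a$ on $Z$; nothing in the setup forces $E_a$ (even restricted to the $E_a$-saturation of the image of the reduction) to be potentially $\BPi^0_3$ as a Polish $G$-space. The Becker--Kechris change-of-topology theorems let you make chosen invariant Borel \emph{sets} open or closed while keeping the action continuous, but they do not automatically lower the complexity of the orbit equivalence relation as a subset of $Z\times Z$. So the sentence ``refine the Polish topology on $Z$ \ldots\ so that $E_a$ becomes $\BPi^0_3$'' is exactly the step that needs an argument, and it is not the kind of bookkeeping Lemma~\ref{lem:equiv} handles.

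The paper sidesteps this by \emph{not} using the Corollary. Instead it works class-by-class: via a reflection argument it produces a $G$-invariant Borel equivalence relation $F\supseteq E_a$ on the image such that $f(x)\mathrel{F}f(y)$ implies $x\mathrel{E}y$, builds a Borel right-inverse $h$ using Lusin--Novikov, and then shows (after a Becker--Kechris refinement making the relevant Vaught transforms open) that each single $F$-class $[f(x)]_F$ is $\BSigma^0_2$, hence $\BPi^0_3$. Proposition~\ref{prop:prodn_reduction} is then applied with $A=[f(x)]_F$ and $B=[f(y)]_F$, yielding that $g\circ f$ is a genuine reduction to $\prod_n E_{b_n}$. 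In other words, the ``transportation through the reduction'' you flag as the obstacle is carried out one invariant $\BPi^0_3$ class at a time, never by making the whole of $E_a$ low-complexity.
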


\begin{proof}
Let $Z$ be a Polish space with continuous action $a : G \curvearrowright Z$ and suppose $E$ lives on a Polish space $X$.
Let $f : X \rightarrow Z$ be a Borel reduction from $E$ to $E_a$.
Let $B := f[X]$, which is Borel as $f$ is Borel and countable-to-one.
By a reflection argument \cite[Exercise 35.19]{Kechris1995}, let $F \supseteq E$ be a Borel equivalence relation satisfying
\[ \forall x, x' \in X, (f(x) \mathrel{F} f(y)) \rightarrow x \mathrel{E} y.\]
Let $P \subseteq Z \times X$ be defined by
\[ P := \{(z, x) \in Z \times X \mid z \mathrel{F} f(x)\}.\]
As this is a Borel set with countable sections, by Lusin-Novikov there is a Borel function $h : [B]_F \rightarrow X$ uniformizing $P$.
In particular, $h$ is a reduction from $F \upharpoonright B$ to $E$ satisfying $g(f(x)) \mathrel{E} x$ for every $x \in X$.

By \cite[Theorem 5.2.1]{BeckerKechris1996}, there is a finer Polish $G$-space topology on $Z$, compatible with the original topology in the sense that it generates the same Borel $\sigma$-algebra, in which $B$ is closed and $g^{-1}[U]^{\Delta V}$ is open for every basic open $U \subseteq Y$ and open $V \subseteq G$.

Let $g : Z \rightarrow Y$ be the function from Proposition \ref{prop:prodn_reduction} with $Y = \prod_n Y_n$.

\begin{claim}
The composition $g \circ f$ is a Borel reduction from $E$ to $\prod_n E_{b_n}$.
\end{claim}

\begin{proof}
Fix arbitrary $x,y \in X$.
First, suppose $x \mathrel{E} y$.
Then we have $f(x) \mathrel{E_a} f(y)$.
By the choice of $g$, we have $g(f(x)) \mathrel{\prod_n E_{b_n}} g(f(y))$ as desired.

Now suppose $g(f(x)) \mathrel{\prod_n E_{b_n}} g(f(y))$.
Because $h(f(x))$ is countable, it is $\Sigma^0_2$, and thus 
\[[h(f(x))]_E = \bigcup_n \bigcap_m (X \setminus U_{n, m})\]
for some sequence $(U_{n, m})$ of basic open subsets of $X$.
In particular,
\begin{align*}
    [f(x)]_F &= \Big[\bigcup_n \bigcap_m h^{-1}[X \setminus U_{n, m}])\Big] = \Big[\bigcup_n \bigcap_m h^{-1}[X \setminus U_{n, m}]\Big]^{\Delta G} \\
    &= \bigcup_n \Big[\bigcap_m h^{-1}[X \setminus U_{n, m}]\Big]^{\Delta G} = \bigcup_n \bigcup_V \Big[\bigcap_m h^{-1}[X \setminus U_{n, m}]\Big]^{*V} \\
    &= \bigcup_n \bigcup_V \bigcap_m h^{-1}[X \setminus U_{n, m}]^{*V} = \bigcup_n \bigcup_V \bigcap_m B \setminus h^{-1}[U_{n, m}]^{\Delta V}
\end{align*}
where $V$ ranges over basic open subsets of $G$.
In particular, $[f(x)]_F$ is $\BSigma^0_2$ and thus $\BPi^0_3$.
Similarly, so is $[f(y)]_F$.
Thus by choice of $g$ we have $f(x) \mathrel{F} f(y)$.
By choice of $F$ we have $x \mathrel{E} y$ as desired.
\end{proof}

By Proposition \ref{prop:cotton_ess_ctbl}, we have that $\prod_n E_{b_n}$ is Borel-reducible to $E_0^\omega$ and thus so is $E$.
By the Hjorth-Kechris Sixth Dichotomy Theorem (see \cite{HjorthKechris1997}), we have that $E$ is in fact hyperfinite.
\end{proof}

We remark that by Fact \ref{fact:mackey-hjorth} we could weaken the assumption in the previous corollary to say that $G$ is instead simply involved by a countable product of locally-compact abelian Polish groups.

We also note that the application of the Sixth Dichotomy Theorem could likely be eliminated by some finer application of the Hjorth analysis.

\section{Final remarks and further questions}

It is not clear that the use of treeability is necessary in Theorem \ref{thm:treeable}. It is still open if \emph{every} countable Borel equivalence relation is classifiable by an abelian Polish group. In fact, it is even open if classifiability by a tsi Polish group and by an abelian Polish group are the same. In \cite{AllisonPanagio}, it is shown that there is an equivalence relation classifiable by a cli Polish group (i.e. a Polish group with a compatible complete left-invariant metric) but not a tsi Polish group (i.e. a Polish group with a compatible complete two-sided metric). Thus the cli Polish groups have higher ``classification strength" than the tsi Polish groups. However it is not clear if the tsi Polish groups have higher classification strength than the abelian Polish groups.

The converse to Theorem \ref{thm:treeable} is false. It is obvious that the class of equivalence relations classifiable by abelian Polish groups is closed under products. However the class of countable Borel treeable equivalence relations is not. For example, if $F$ is the orbit equivalence relation induced by the free part of the action of the free group $F_2$ on $2^{F_2}$ by shifts, then $E \times E_0$ is classifiable by $G \times \mathbb{Z}$ where $G$ is an abelian Polish group which classifies $E$. However $E \times E_0$ is not treeable (see \cite[Corollary 3.28]{JKL2002}).

\bibliographystyle{alpha}
\bibliography{main}
\end{document}